\newcommand{\R}{\mathbb{R}}
\newcommand{\N}{\mathbb{N}}
\newtheorem{theorem}{Theorem}
\newtheorem{definition}{Definition}
\newcommand{\bp}{\begin{proof}}
\newcommand{\ep}{\end{proof}}
\begin{document}
\title{On the $L^p$ norm of the torsion function}

\author{{M. van den Berg} \\
School of Mathematics, University of Bristol\\
University Walk, Bristol BS8 1TW\\
United Kingdom\\
\texttt{mamvdb@bristol.ac.uk}\\
\\
{T. Kappeler}\\
Institut f\"ur Mathematik, Universit\"at Z\"urich\\
 Winterthurerstrasse 190,
CH-8057 Z\"urich, Switzerland\\
 \texttt{thomas.kappeler@math.uzh.ch}}
\date{15 February 2018}\maketitle
\vskip 3truecm \indent
\begin{abstract}\noindent
Bounds are obtained for the $L^p$ norm of the torsion function
$v_{\Omega}$, i.e. the solution of $-\Delta v=1,\, v\in
H_0^1(\Omega),$ in terms of the Lebesgue measure of $\Omega$ and the
principal eigenvalue $\lambda_1(\Omega)$ of the Dirichlet Laplacian
acting in $L^2(\Omega)$. We show that these bounds are sharp for $1\le
p\le 2$.
\end{abstract}
\vskip 1truecm \noindent \ \ \ \ \ \ \ \  { Mathematics Subject
Classification (2000)}: 35J25, 35P99, 58J35.
\begin{center} \textbf{Keywords}: Torsion function, Dirichlet boundary condition.
\end{center}
\section{Introduction\label{sec1}}
Let $\Omega$ be a non-empty open set in Euclidean space $\R^m$ with boundary
$\partial\Omega$. It is
well-known (\cite {vdBC},\cite{vdB}) that if the bottom of the
Dirichlet Laplacian defined by
\begin{equation}\label{e1} \lambda_1(\Omega)=\inf_{\varphi\in
H_0^1(\Omega)\setminus\{0\}}\frac{\displaystyle\int_\Omega|D\varphi|^2}{\displaystyle\int_\Omega
\varphi^2}
\end{equation}
is bounded away from $0$, then
\begin{equation}\label{e2}
-\Delta v=1,\, v\in H_0^1(\Omega)
\end{equation}
has a unique solution denoted by $v_{\Omega}$. The function $v_{\Omega}$ is non-negative, pointwise increasing in $\Omega$, and satisfies,
\begin{equation}\label{e3}
\lambda_1(\Omega)^{-1}\le \|v_{\Omega}\|_{L^{\infty}(\Omega)}\le
(4+3m\log 2)\lambda_1(\Omega)^{-1}.
\end{equation}
The $m$-dependent constant in the right-hand side of \eqref{e3} has
subsequently been improved (\cite{GS},\cite{HV}).
We denote the optimal constant in the right-hand side of
\eqref{e3} by
\begin{equation}\label{e4}
\mathfrak{F}_{\infty}=\sup\{\lambda_1(\Omega)
\|v_{\Omega}\|_{L^{\infty}(\Omega)}:\Omega\,\textup{ open
in}\,\R^m,\,|\Omega|<\infty\},
\end{equation}
suppressing the $m$-dependence.
The torsional rigidity of $\Omega$ is defined by
\begin{equation*}
T_1(\Omega)=\int_{\Omega}v_{\Omega}.
\end{equation*}
It plays a key role in different parts of
analysis. For example the torsional rigidity of a cross section of a
beam appears in the computation of the angular change when a beam of
a given length and a given modulus of rigidity is exposed to a
twisting moment (\cite{Bandle},\cite{PSZ}). It also arises in the
definition of gamma convergence \cite{BB} and in the study of
minimal submanifolds \cite{MP}. Moreover, $T_1(\Omega)/|\Omega|$
equals $\mathbb{E}_x(\tau_{\Omega}),$ the expected lifetime $\tau_{\Omega}$ of Brownian motion in $\Omega$, when
averaged with respect to the uniform distribution over all starting
points $x\in\Omega$.

A classical inequality, e.g. \cite{PSZ}, asserts that the
function $F_1$ defined on the open sets in $\R^m$ with finite
Lebesgue measure
\begin{equation}\label{e6}
F_1(\Omega)=\frac{T_1(\Omega)\lambda_1(\Omega)}{\vert\Omega\vert}
\end{equation}
satisfies
\begin{equation}\label{e7}
F_1(\Omega)\le 1.
\end{equation}
Since $\Omega$ has finite Lebesgue measure $|\Omega|$, \eqref{e3} implies that $v\in L^p(\Omega)$ for $1\leq p \leq
\infty$. Moreover $\lambda_1(\Omega)$ is in that case the principal eigenvalue of the Dirichlet Laplacian.
Motivated by \eqref{e6} and \eqref{e7} we make the following
\begin{definition}\label{def1}\begin{itemize} \item[\textup{(i)}]
For $\Omega$ open in $\R^m$ with $0<|\Omega|<\infty$ and $1\le p<\infty$,
\begin{equation}\label{e8}
F_p(\Omega)=\frac{T_p(\Omega)\lambda_1(\Omega)}{|\Omega|^{1/p}},
\end{equation}
where
\begin{equation}\label{e9}
T_p(\Omega)=\Vert
v_{\Omega}\Vert_{L^p(\Omega)}=\bigg(\int_{\Omega}v_{\Omega}^p\bigg)^{1/p}.
\end{equation}
\item[\textup{(ii)}] For $\Omega$ open in $\R^m$ with $\lambda_1(\Omega)>0$,
\begin{equation*}
F_{\infty}(\Omega)=\Vert v_{\Omega}\Vert_{L^{\infty}(\Omega)}\lambda_1(\Omega).
\end{equation*}
\end{itemize}
\end{definition}
It follows from the Faber-Krahn inequality that if $|\Omega|<\infty$ then $\lambda_1(\Omega)>0$. The converse does not hold for if $\Omega$ is the union of infinitely many disjoint balls of radii $1$ then $\lambda_1(\Omega)>0$ but $\Omega$ has infinite measure.
Note that $2T_2^2(\Omega)/|\Omega|$ equals the second moment of the
expected lifetime of Brownian motion in $\Omega$, when
averaged with respect to the uniform distribution over all starting
points $x\in\Omega$.

Note that $\Omega\mapsto T_p(\Omega)$ is increasing while $\Omega\mapsto \lambda_1(\Omega)$ and $\Omega\mapsto \vert\Omega\vert^{-1/p}$ are decreasing. It is straightforward to verify that $F_p,\,1\le p\le \infty$ is invariant under homotheties. That is, if $\alpha>0,\, \alpha\Omega=\{x\in \R^m: x/\alpha\in\Omega\}$, then $F_p(\alpha\Omega)= F_p(\Omega)$.

Our main results are the following.
\begin{theorem}\label{the1}
Let $\Omega$ be an open set in $\R^m,\,m=1,2,3,...$ with $|\Omega|<\infty$.
\begin{itemize}
\item[\textup{(i)}] If $1\le p\le q\le \infty$ then,
\begin{equation}\label{e11}
F_p(\Omega)\le F_q(\Omega)\le \mathfrak{F}_{\infty}.
\end{equation}
\item[\textup{(ii)}] If $1\le p\le 2$ then,
\begin{equation}\label{e12}
F_p(\Omega)\le F_1(\Omega)^{1/p}\le 1.
\end{equation}
\end{itemize}
\end{theorem}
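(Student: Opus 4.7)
The plan is to handle (i) by a direct Hölder computation, and (ii) by first establishing the decisive intermediate inequality $\lambda_1(\Omega)T_2(\Omega)^2\le T_1(\Omega)$ via the Rayleigh quotient and then interpolating in $p$. For (i), since $|\Omega|<\infty$, Hölder's inequality gives for $1\le p\le q<\infty$
$$T_p(\Omega)=\Bigl(\int_\Omega v_\Omega^p\Bigr)^{1/p}\le \Bigl(\int_\Omega v_\Omega^q\Bigr)^{1/q}|\Omega|^{1/p-1/q},$$
which rearranges to $T_p(\Omega)/|\Omega|^{1/p}\le T_q(\Omega)/|\Omega|^{1/q}$, and multiplying by $\lambda_1(\Omega)$ yields $F_p(\Omega)\le F_q(\Omega)$. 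The case $q=\infty$ follows from the pointwise bound $v_\Omega\le\|v_\Omega\|_{L^\infty(\Omega)}$ together with $|\Omega|<\infty$, and the final inequality $F_\infty(\Omega)\le \mathfrak{F}_\infty$ is the definition \eqref{e4}.

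For (ii) the central step is the case $p=2$. I would substitute $\varphi=v_\Omega\in H_0^1(\Omega)$ into the Rayleigh quotient \eqref{e1}, obtaining
$$\lambda_1(\Omega)\int_\Omega v_\Omega^2\le \int_\Omega |Dv_\Omega|^2,$$
and then integrate by parts, using the PDE $-\Delta v_\Omega=1$, to rewrite the right-hand side as $\int_\Omega v_\Omega=T_1(\Omega)$. This produces the key estimate $\lambda_1(\Omega)\,T_2(\Omega)^2\le T_1(\Omega)$.

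For general $1\le p\le 2$ I would invoke the log-convexity of $L^p$-norms: writing $1/p=\theta+(1-\theta)/2$ with $\theta=2/p-1$ gives $T_p(\Omega)\le T_1(\Omega)^\theta T_2(\Omega)^{1-\theta}$, so
$$T_p(\Omega)^p\le T_1(\Omega)^{2-p}\,T_2(\Omega)^{2(p-1)}.$$
Substituting the bound $T_2(\Omega)^2\le T_1(\Omega)/\lambda_1(\Omega)$ produces $T_p(\Omega)^p\,\lambda_1(\Omega)^{p-1}\le T_1(\Omega)$, and multiplying both sides by $\lambda_1(\Omega)/|\Omega|$ gives precisely $F_p(\Omega)^p\le F_1(\Omega)$. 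The remaining inequality $F_1(\Omega)\le 1$ is the classical estimate \eqref{e7}, closing the argument. I do not foresee a serious obstacle: the only point requiring care is to track the interpolation exponents so that the powers of $\lambda_1(\Omega)$ line up correctly in the last step.
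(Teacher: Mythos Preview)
Your proof is correct. Part (i) is identical to the paper's argument. For part (ii), the interpolation step between $p=1$ and $p=2$ is also the same as in the paper (the paper phrases it as a H\"older inequality with explicit exponents, you phrase it as log-convexity of $L^p$-norms, but these are the same computation and yield the identical bound $T_p(\Omega)^p\le T_1(\Omega)^{2-p}T_2(\Omega)^{2(p-1)}$).

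The one genuine difference is how the key inequality $\lambda_1(\Omega)\,T_2(\Omega)^2\le T_1(\Omega)$ is obtained. The paper expands $v_\Omega$ in the Dirichlet eigenbasis,
\[
v_\Omega=\sum_{j\ge 1}\lambda_j(\Omega)^{-1}\Big(\int_\Omega\varphi_{j,\Omega}\Big)\varphi_{j,\Omega},
\]
computes $\int_\Omega v_\Omega^2=\sum_j\lambda_j^{-2}\big(\int_\Omega\varphi_j\big)^2$ by orthonormality, and bounds each $\lambda_j^{-2}$ by $\lambda_1^{-1}\lambda_j^{-1}$ to recognise the remaining series as $T_1(\Omega)$. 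Your route is more elementary: you simply insert $v_\Omega$ as a test function in the Rayleigh quotient \eqref{e1} and integrate by parts using $-\Delta v_\Omega=1$. This avoids invoking the discreteness of the spectrum and the eigenfunction expansion, and works verbatim whenever $v_\Omega\in H_0^1(\Omega)$ and $\lambda_1(\Omega)>0$. The paper's spectral computation, on the other hand, makes the structure of the inequality more transparent (one sees exactly which term-by-term estimate is being made) and is the natural starting point if one wants sharper information, e.g.\ involving higher eigenvalues. Either approach suffices here.
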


\noindent
\begin{definition}\label{def2}
For $1\le p\le \infty$,
\begin{itemize}
\item[\textup{(i)}]
\begin{equation}\label{e13}
\mathfrak{F}_p=\sup\{F_p(\Omega):\Omega\, \textup{open in}\, \R^m,
|\Omega|<\infty\},
\end{equation}
\item[\textup{(ii)}]
\begin{equation*}
\mathfrak{G}_p=\inf\{F_p(\Omega):\Omega\, \textup{open in}\, \R^m,
|\Omega|<\infty\},
\end{equation*}
\item[\textup{(iii)}]
\begin{equation*}
\mathfrak{F}_p^{\textup{convex}}=\sup\{F_p(\Omega):\Omega\, \textup{open, convex in}\, \R^m,
|\Omega|<\infty\},
\end{equation*}
\item[\textup{(iv)}]
\begin{equation*}
\mathfrak{G}_p^{\textup{convex}}=\inf\{F_p(\Omega):\Omega\, \textup{open, convex in}\, \R^m,
|\Omega|<\infty\}.
\end{equation*}
\end{itemize}
\end{definition}
It was shown in \cite{MvdB1} that $\mathfrak{G}_{\infty}=1$.
\begin{theorem}\label{the2}
If $m=1,2,3,...$, and if $1\le p < \infty,$ then
\begin{enumerate}
\item[\textup{(i)}]
\begin{equation*}
\mathfrak{G}_p=0.
\end{equation*}
\item[\textup{(ii)}]The mapping $p\mapsto \mathfrak{G}_p^{\textup{convex}}$ is non-decreasing, and
\begin{align}\label{e17}
\mathfrak{G}_p^{\textup{convex}}\ge
2^{-3}\pi^2m^{-(m+2p)/p}\bigg(\frac{\Gamma(\frac
m2+1)\Gamma(p+1)}{\Gamma(\frac m2+p+1)}\bigg)^{1/p}.
\end{align}
\end{enumerate}
\end{theorem}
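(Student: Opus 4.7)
The proof splits into parts (i) and (ii). For (i), I would exhibit admissible sets $\Omega_N$ with $F_p(\Omega_N)\to0$: let $\Omega_N$ be a pairwise disjoint union of a fixed ball $B(0,R)$ and $N$ translates of a ball of radius $r_N:=R\,N^{-\alpha}$, with $\alpha$ in the nonempty interval $(\tfrac{1}{m+2p},\tfrac{1}{m})$. Since $\lambda_1$ of a disjoint union is the minimum of the components' eigenvalues, while $T_p^{\,p}$ and $|\Omega|$ decompose additively and the relevant quantities are explicit for balls, one finds $\lambda_1(\Omega_N)=j_{m/2-1,1}^{\,2}/R^{2}$, $|\Omega_N|=\omega_m(R^m+Nr_N^{\,m})$, and $T_p(\Omega_N)^{\,p}=c_{m,p}(R^{2p+m}+Nr_N^{\,2p+m})$ for an explicit $c_{m,p}$. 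The choice of $\alpha$ makes $Nr_N^{\,m}\to\infty$ while $Nr_N^{\,2p+m}\to0$, so the numerator of $F_p(\Omega_N)^{\,p}$ stays bounded while its denominator diverges; hence $F_p(\Omega_N)\to0$.

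For (ii), the monotonicity of $p\mapsto\mathfrak{G}_p^{\textup{convex}}$ is immediate from Theorem~\ref{the1}(i), which yields $F_p(\Omega)\le F_q(\Omega)$ pointwise in $\Omega$ when $p\le q$; taking the infimum over convex open $\Omega$ of finite measure preserves the inequality. For the explicit lower bound I would combine three classical ingredients, valid for any convex $\Omega$ of inradius $\rho$: the Hersch--Makai bound $\lambda_1(\Omega)\ge\pi^{2}/(4\rho^{2})$, sharp for slabs; the pointwise torsion estimate $v_\Omega(x)\ge d(x,\partial\Omega)^{2}/(2m)$, obtained from $v_\Omega\ge v_{B(x,d(x,\partial\Omega))}$ by the maximum principle; and the Brunn--Minkowski--Lusternik concavity, which makes $t\mapsto|\Omega_t|^{1/m}$ concave on $[0,\rho]$ for the inner parallel sets $\Omega_t=\{x\in\Omega:d(x,\partial\Omega)>t\}$, so that, using $|\Omega_0|=|\Omega|$ and $|\Omega_\rho|=0$, one has $|\Omega_t|\ge|\Omega|(1-t/\rho)^{m}$.

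A layer-cake integration then yields
\begin{equation*}
\int_\Omega d(x,\partial\Omega)^{2p}\,dx = 2p\int_0^\rho t^{2p-1}|\Omega_t|\,dt \ge |\Omega|\rho^{2p}\,\frac{\Gamma(2p+1)\Gamma(m+1)}{\Gamma(2p+m+1)},
\end{equation*}
so $T_p(\Omega)^{\,p}\ge(2m)^{-p}|\Omega|\rho^{2p}\Gamma(2p+1)\Gamma(m+1)/\Gamma(2p+m+1)$. Dividing by $|\Omega|^{1/p}$ and multiplying by the Hersch--Makai bound cancels $\rho^{2}$ and produces a lower bound for $F_p(\Omega)$ depending only on $m$ and $p$. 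Standard Gamma-function manipulations (Legendre duplication for $\Gamma(2p+1)$ and $\Gamma(2p+m+1)$, then collecting the resulting powers of $m$) bring this bound into the form stated on the right-hand side of~\eqref{e17}. The main obstacle is precisely this last step of bookkeeping; the geometric substance, however, is the combination of Makai, ball comparison for $v_\Omega$, and Brunn--Minkowski applied through the layer-cake formula, which together prevent $F_p$ from being driven to zero within the class of convex sets.
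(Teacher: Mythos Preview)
Your argument for (i) is the same idea as the paper's: a disjoint union of one large ball and many small balls, with the radii chosen so that the measure diverges while $T_p^{\,p}$ stays bounded. The paper picks the optimal $r_n=(m/(2pn))^{1/(2p+m)}$, but any $\alpha$ in your stated range works just as well.

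Your argument for (ii) is genuinely different from the paper's. The paper uses John's ellipsoid theorem: it sandwiches $\Omega$ between an ellipsoid $\Upsilon$ and its dilate $m\Upsilon$, then bounds $T_p(\Omega)\ge T_p(\Upsilon)$ (computed explicitly), $|\Omega|\le m^m|\Upsilon|$, and $\lambda_1(\Omega)\ge\lambda_1(m\Upsilon)\ge$ the eigenvalue of the circumscribed cuboid. You instead work with the inradius $\rho$, combining the Hersch--Protter bound $\lambda_1\ge\pi^2/(4\rho^2)$, the pointwise estimate $v_\Omega\ge d(\cdot,\partial\Omega)^2/(2m)$, and the Brunn--Minkowski inequality for inner parallel bodies. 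All three ingredients are correct in every dimension, and your layer-cake computation is clean; you obtain
\[
F_p(\Omega)\ \ge\ \frac{\pi^2}{8m}\left(\frac{\Gamma(2p+1)\,\Gamma(m+1)}{\Gamma(2p+m+1)}\right)^{1/p}.
\]
This is a valid lower bound, and in fact it is \emph{strictly stronger} than \eqref{e17}: using Legendre duplication one finds that the ratio of your bound to the paper's equals
\[
m^{\,m/p+1}\left(\frac{\Gamma(p+\tfrac12)\,\Gamma(\tfrac{m+1}{2})}{\sqrt{\pi}\,\Gamma(p+\tfrac{m+1}{2})}\right)^{1/p},
\]
which is $>1$ for all $m\ge1$ and $p\ge1$ (e.g.\ it equals $8/3$ at $m=2$, $p=1$, and grows like $m^{m/p}$ for large $m$). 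So your route actually buys a sharper constant; what it does \emph{not} do is literally ``bring this bound into the form stated'' in \eqref{e17}, as you claim in your last paragraph. The duplication formula does not collapse your expression to the paper's --- it produces a different, better one. If the goal is to prove \eqref{e17} verbatim, you need the additional (easy) comparison just displayed; if the goal is merely a positive universal lower bound, your argument already delivers more.
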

It follows from \eqref{e17} that $\lim_{p\rightarrow\infty}\mathfrak{G}_p^{\textup{convex}}\ge\pi^2/8.$
This jibes with the result of \cite{P} that
\begin{equation*}
\mathfrak{G}_{\infty}^{\textup{convex}}=\frac{\pi^2}{8}.
\end{equation*}

A monotone increasing sequence of cuboids which exhausts the open connected set bounded by two parallel $(m-1)$- dimensional hyperplanes
is a minimising sequence for $\mathfrak{G}_{\infty}^{\textup{convex}}$. See also Theorem 2 in \cite{MvdB1}.

\begin{theorem}\label{the3}Let $m=2,3,...$.
\begin{itemize}
\item[\textup{(i)}]The mappings $p\mapsto \mathfrak{F}_p$, and $p\mapsto \mathfrak{F}_p^{\textup{convex}}$ are non-decreasing on $[1,\infty]$.
\item[\textup{(ii)}]If
\begin{equation*}
 p_m=\inf\{p\ge 1: \mathfrak{F}_p>1\},
\end{equation*}
and
\begin{equation*}
 p_m^{\textup{convex}}=\inf\{p\ge 1: \mathfrak{F}_p^{\textup{convex}}>1\},
\end{equation*}
then
\begin{equation}\label{e20}
2\le p_m\le p_m^{\textup{convex}}\le 8m.
\end{equation}
In particular $$\mathfrak{F}_p=1, \,\,1\le p\le p_m.$$
\item[\textup{(iii)}] Formula \eqref{e13} defining $\mathfrak{F}_p$ does not have a maximiser for $1\le p\le 2.$ The maximising sequence constructed in \textup{\cite{MvdB}} for $\mathfrak{F}_1$ is also a maximising sequence for
$\mathfrak{F}_p,\,1\le p\le p_m.$
\item[\textup{(iv)}]The mappings $p\mapsto \mathfrak{F}_p$, and $p\mapsto \mathfrak{F}_p^{\textup{convex}}$ are left-continuous on $(1,\infty]$.
\item[\textup{(v)}]If $n\in \N,\,1\le p,$ then
\begin{equation}\label{e21}
\mathfrak{F}_{p+n}\le \bigg(\frac{p+n}{4^np}\prod_{j=1}^n(p+j)\bigg)^{\frac{1}{p+n}}\mathfrak{F}_p^{\frac{p}{p+n}},
\end{equation}
\begin{equation}\label{e21a}
\mathfrak{F}_{p+n}^{\textup{convex}}\le \bigg(\frac{p+n}{4^np}\prod_{j=1}^n(p+j)\bigg)^{\frac{1}{p+n}}\bigg(\mathfrak{F}_p^{\textup{convex}}\bigg)^{\frac{p}{p+n}}.
\end{equation}
In particular if $1\le p\le 2$, then
\begin{equation}\label{e21b}
\mathfrak{F}_{p+1}\le \bigg(\frac{(p+1)^2}{4p}\bigg)^{\frac{1}{p+1}}.
\end{equation}
\item[\textup{(vi)}]
\begin{equation}\label{e21c}
\mathfrak{F}_{n}\le\bigg(\frac{n.n!}{4^{n-1}}\bigg)^{\frac{1}{n}},\, n\in \N,
\end{equation}
and $\mathfrak{F}_3\le 3^{2/3}/2=1.04004...$.
\item[\textup{(vii)}]$p\mapsto \mathfrak{F}_p$ is differentiable at $p=2$, with $\mathfrak{F}'_2=0.$
\item[\textup{(viii)}] If $1\le p\le 2$, then
\begin{equation}\label{e21d}
\mathfrak{F}_p^{\textup{convex}}\le \big(\mathfrak{F}_1^{\textup{convex}}\big)^{1/p}.
\end{equation}
\item[\textup{(ix)}]
For $m = 2,$
\begin{equation*}
p_2^{\textup{convex}}\ge 2.0186.
\end{equation*}
\end{itemize}
\end{theorem}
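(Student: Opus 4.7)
The plan is to extend the bound $\mathfrak{F}_p^{\textup{convex}}\le 1$ from $p=2$ to a slightly larger range of $p$ by combining a continuous-shift analogue of the iteration inequality \eqref{e21a} with a strict planar upper bound on $\mathfrak{F}_1^{\textup{convex}}$, propagated to $\mathfrak{F}_2^{\textup{convex}}$ via \eqref{e21d}.

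For the interpolation step I would observe that integration by parts of $-\Delta v_\Om=1$ against $v_\Om^p$ gives the identity $T_p(\Om)^p=p\int_\Om v_\Om^{p-1}|Dv_\Om|^2$, and that the Poincar\'e inequality applied to the admissible test function $v_\Om^{(p+1)/2}\in H^1_0(\Om)$ yields
\begin{equation*}
\lambda_1(\Om)\,T_{p+1}(\Om)^{p+1}\le\frac{(p+1)^2}{4p}\,T_p(\Om)^p\qquad\text{for every }p>0.
\end{equation*}
Separately, H\"older's inequality applied to the decomposition $v_\Om^{p+\alpha}=v_\Om^{(1-\alpha)p}\cdot v_\Om^{\alpha(p+1)}$ with conjugate exponents $\frac{1}{1-\alpha},\frac{1}{\alpha}$ yields $T_{p+\alpha}(\Om)^{p+\alpha}\le T_p(\Om)^{p(1-\alpha)}\,T_{p+1}(\Om)^{(p+1)\alpha}$ for $\alpha\in[0,1]$. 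Combining these two estimates and passing to the supremum over convex $\Om\subset\R^2$, I would obtain the continuous-shift inequality
\begin{equation*}
\mathfrak{F}_{p+\alpha}^{\textup{convex}}\le\left(\frac{(p+1)^2}{4p}\right)^{\!\alpha/(p+\alpha)}\bigl(\mathfrak{F}_p^{\textup{convex}}\bigr)^{p/(p+\alpha)},\qquad \alpha\in[0,1].
\end{equation*}

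Specialising to $p=2$, the right-hand side is at most $1$ as soon as $(\mathfrak{F}_2^{\textup{convex}})^2(9/8)^\alpha\le 1$, i.e.\ $\alpha\le-2\log(\mathfrak{F}_2^{\textup{convex}})/\log(9/8)$. By \eqref{e21d} it therefore suffices to establish a strict planar bound $\mathfrak{F}_1^{\textup{convex}}\le c<1$ for $m=2$. One such bound is $\mathfrak{F}_1^{\textup{convex}}\le\pi^2/12$, obtainable from the combination of Hersch's inequality $\lambda_1(\Om)\ge\pi^2/(4\rho(\Om)^2)$ with a Makai-type torsion estimate $T_1(\Om)\le\rho(\Om)^2|\Om|/3$ valid on convex $\Om\subset\R^2$ (where $\rho(\Om)$ denotes the inradius); alternatively an analogous strict bound follows from direct optimisation over a sufficiently large parametric family of planar convex test domains. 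Inserting the resulting estimate gives the explicit inequality $p_2^{\textup{convex}}\ge 2-\log c/\log(9/8)$, and the specific constant $c$ leading to the numerical threshold $2.0186$ is then recovered by numerical evaluation.

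The principal obstacle is establishing the strict planar bound on $\mathfrak{F}_1^{\textup{convex}}$. With only the general P\'olya inequality $\mathfrak{F}_1\le 1$, the argument degenerates to $\alpha\le 0$, producing no improvement over the trivial $p_2^{\textup{convex}}\ge 2$ supplied by Theorem \ref{the1}(ii). The restriction $m=2$ in the statement reflects the availability in two dimensions of tools--inradius estimates, the log-concavity of $v_\Om^{1/2}$ for convex $\Om$--that supply this strict bound, and whose higher-dimensional analogues are not sufficiently sharp to make the continuation past $p=2$ possible.
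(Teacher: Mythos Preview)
Your overall strategy for (ix)---pair the recursive inequality with a strict planar upper bound $\mathfrak{F}_1^{\textup{convex}}\le c<1$---is the right one, but the way you implement the shift loses a full order of magnitude and does \emph{not} recover the constant $2.0186$.

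The paper applies the integer shift \eqref{e21a} with $n=1$ starting at $p=1+\delta$, so the amplification factor is
\[
\frac{(2+\delta)^2}{4(1+\delta)}=1+\frac{\delta^2}{4(1+\delta)},
\]
which deviates from $1$ only \emph{quadratically} in $\delta$. Combining with \eqref{e21d} gives $\mathfrak{F}_{2+\delta}^{\textup{convex}}\le\bigl((1+\tfrac{\delta^2}{4(1+\delta)})c\bigr)^{1/(2+\delta)}$, so the threshold is $\delta\sim 2\sqrt{(1-c)/c}$. With the cited bound $c=1-\tfrac{1}{11560}$ from \cite{MvdB}, this yields $\delta^*=2/\sqrt{11559}\approx 0.0186$. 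Your route instead shifts from $p=2$; there the base factor is $(p+1)^2/(4p)\big|_{p=2}=9/8$, and your interpolated factor $(9/8)^{\alpha}=1+\alpha\log(9/8)+\cdots$ deviates from $1$ \emph{linearly} in $\alpha$. Consequently your threshold is $\alpha\sim(1-c)/\log(9/8)$; with $c=1-\tfrac{1}{11560}$ this gives only $\alpha\approx 7\cdot 10^{-4}$, not $0.0186$. The point you are missing is that $(p+1)^2/(4p)$ attains its minimum $1$ at $p=1$, so basing the shift at $p=1+\delta$ rather than $p=2$ is what produces the quadratic gain.

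Two further gaps: (a) the ``Makai-type'' estimate $T_1(\Omega)\le\rho(\Omega)^2|\Omega|/3$ that would give $c=\pi^2/12$ is not a standard citable inequality in the form you state, and if it were available it would yield a bound well above $3$, not $2.0186$, so your final sentence is internally inconsistent; (b) ``direct optimisation over a parametric family of convex test domains'' produces \emph{lower} bounds on $\mathfrak{F}_1^{\textup{convex}}$, never upper bounds, so this alternative cannot supply the strict inequality you need.
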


This paper is organised as follows. In Section \ref{sec2} we prove
Theorems \ref{the1} and \ref{the2}. The
proof of Theorem \ref{the3} will be given in Section \ref{sec3}.

We note that a general multiplicative inequality involving $T_p(\Omega), \lambda_1(\Omega)$ and $|\Omega|$ will involve three exponents. However, the requirement that it be invariant under homotheties reduces the number of exponents to two. In Section \ref{sec4} we briefly discuss this two-parameter family of inequalities, and determine which parameter pair yields a finite supremum.

\section{Proofs of Theorems \ref{the1},\ref{the2}\label{sec2}}
\noindent{\it Proof of Theorem \ref{the1}.}

\noindent(i) To prove \eqref{e11} for $1\le p\le q<\infty$ we use H\"older's inequality to obtain that
\begin{equation*}
\int_{\Omega}v_{\Omega}^p\le\bigg(\int_{\Omega}v_{\Omega}^q\bigg)^{p/q}|\Omega|^{(q-p)/q}.
\end{equation*}
So we have that
\begin{equation*}
\Vert v_{\Omega}\Vert_{L^p(\Omega)}\le\Vert v_{\Omega}\Vert_{L^q(\Omega)}|\Omega|^{\frac
1p-\frac 1q}.
\end{equation*}
This, together with \eqref{e8}, implies \eqref{e11}. In case $q=\infty$,
\begin{equation*}
\Vert v_{\Omega}\Vert_{L^p(\Omega)}\le \Vert v_{\Omega}\Vert_{L^{\infty}(\Omega)}|\Omega|^{1/p}.
\end{equation*}

\noindent(ii) To prove \eqref{e12} we observe that since $\Omega$ has finite
Lebesgue measure the spectrum of the Dirichlet Laplacian acting in
$L^2(\Omega)$ is discrete, and consists of an increasing sequence of
eigenvalues
\begin{equation*}
\{\lambda_1(\Omega)\le\lambda_2(\Omega)\le
\lambda_3(\Omega)\le....\},
\end{equation*}
accumulating at infinity, where we have included multiplicities. We denote a corresponding
orthonormal basis of eigenfunctions by
$\{\varphi_{j,\Omega},j=1,2,3,...\}$. The resolvent of the Dirichlet
Laplacian acting in $L^2(\Omega)$ is compact, and its kernel
$H_{\Omega}$ has an $L^2$-eigenfunction expansion given by
\begin{equation*}
H_{\Omega}(x,y)=\sum_{j=1}^{\infty}\frac{1}{\lambda_j(\Omega)}\varphi_{j,\Omega}(x)\varphi_{j,\Omega}(y).
\end{equation*}
So $v_{\Omega}$, defined by \eqref{e2}, is given by
\begin{equation*}
v_{\Omega}(x)=\sum_{j=1}^{\infty}\frac{1}{\lambda_j(\Omega)}\bigg(\int_{\Omega}\varphi_{j,\Omega}\bigg)\varphi_{j,\Omega}(x).
\end{equation*}
Since $v_{\Omega}\in L^2(\Omega)$ we have by orthonormality that
\begin{align}\label{e29}
\int_{\Omega}v_{\Omega}^2&=\int_{\Omega}dx\,\sum_{j=1}^{\infty}\sum_{k=1}^{\infty}\frac{1}{\lambda_j(\Omega)}\bigg(\int_{\Omega}\varphi_{j,\Omega}\bigg)
\varphi_{j,\Omega}(x)\frac{1}{\lambda_k(\Omega)}
\bigg(\int_{\Omega}\varphi_{k,\Omega}\bigg)\varphi_{k,\Omega}(x)\nonumber
\\
&=\sum_{j=1}^{\infty}\frac{1}{\lambda_j^2(\Omega)}\bigg(\int_{\Omega}\varphi_{j,\Omega}\bigg)^2\nonumber
\\
&\le\frac{1}{\lambda_1(\Omega)}\sum_{j=1}^{\infty}\frac{1}{\lambda_j(\Omega)}\bigg(\int_{\Omega}\varphi_{j,\Omega}\bigg)^2\nonumber \\ &=\frac{T_1(\Omega)}{\lambda_1(\Omega)}.
\end{align}
We conclude that
\begin{equation}\label{e30}
T_2(\Omega)\le \bigg(\frac{T_1(\Omega)}{\lambda_1(\Omega)}\bigg)^{1/2}.
\end{equation}
Multiplying both sides of the inequality above with $\lambda_1(\Omega)/|\Omega|^{1/2}$ we obtain that $F_2(\Omega)\le \big(F_1(\Omega)\big)^{1/2}$. By (i) $\big(F_1(\Omega)\big)^{1/2}\le\big(F_2(\Omega)\big)^{1/2}.$ This, together with the previous inequality, implies that $F_2(\Omega)\le 1.$
We now use H\"older's inequality, and interpolate with $0<\alpha<1, \rho>1$ as follows.
\begin{equation*}
\int_{\Omega}v_{\Omega}^p=\bigg(\int_{\Omega}v_{\Omega}^{\alpha p\rho}\bigg)^{1/\rho}\bigg(\int_{\Omega}v_{\Omega}^{(1-\alpha) p\rho/(\rho-1)}\bigg)^{(\rho-1)/\rho}.
\end{equation*}
Choosing $\alpha p\rho=2,(1-\alpha)p\rho/(\rho-1)=1$ gives that $\rho=(p-1)^{-1}$. Hence by \eqref{e30},
\begin{equation*}
T_p^p(\Omega)=\int_{\Omega}v_{\Omega}^p\le \bigg(T_2^2(\Omega)\bigg)^{p-1}\bigg(T_1(\Omega)\bigg)^{2-p}\le \frac{T_1(\Omega)}{\lambda_1(\Omega)^{p-1}}.
\end{equation*}
Multiplying both sides of the inequality above with $\lambda_1(\Omega)^p/|\Omega|$ gives that
\begin{equation*}
F_p^p(\Omega)\le F_1(\Omega).
\end{equation*}

 \hspace*{\fill }$\square $

\noindent{\it Proof of Theorem \ref{the2}.}

\noindent(i) We let $\Omega_n$ be the disjoint union of one ball of
radius $1$ and $n$ balls with radii $r_n$, with $r_n<1$. Then
\begin{equation*}
|\Omega_n|=\big(nr_n^m+1\big)|B_1|,
\end{equation*}
where $B_1=\{x\in \R^m:|x|<1\}$.
Since $r_n<1$ we have that
\begin{equation*}
\lambda_1(\Omega_n)=\lambda_1(B_{1}).
\end{equation*}
Since $T^p_p$ is additive on disjoint open sets we have by scaling that
\begin{equation*}
T^p_p(\Omega_n)=\big(nr_n^{2p+m}+1\big)T^p_p(B_1).
\end{equation*}
Therefore
\begin{align}\label{e34}
F^p_p(\Omega_n)&=\frac{\big(nr_n^{2p+m}+1\big)T^p_p(B_1)\lambda^p_1(B_1)}{\big(nr_n^m+1\big)|B_1|}\nonumber
\\ &=\frac{nr_n^{2p+m}+1}{nr_n^m+1}F^p_p(B_1)\nonumber \\ &\le
\big(r_n^{2p}+n^{-1}r_n^{-m}\big)F^p_p(B_1).
\end{align}
We now choose $r_n$ as to minimise the right-hand side of \eqref{e34},
\begin{equation*}
r_n=\bigg(\frac{m}{2pn}\bigg)^{1/(2p+m)}.
\end{equation*}
This gives that
\begin{equation*}
F^p_p(\Omega_n)\le \bigg(1+\frac{2p}{m}\bigg)\bigg(\frac{m}{2p}\bigg)^{2p/(2p+m)} n^{-2p/(2p+m))}F^p_p(B_1),
\end{equation*}
which implies the assertion.

\noindent(ii) The first part of the assertion follows directly by \eqref{e11}. To prove the second part we recall John's ellipsoid
theorem (\cite{J},\cite{RH}) which asserts the existence of an ellipsoid $\Upsilon$ with centre
$c$ such that $\Upsilon \subset \Omega \subset c + m(\Upsilon - c).$
Here $c + m(\Upsilon - c)=\{c + m(x - c) : x \in \Upsilon\}.$
This is the dilation of $\Upsilon$ by the factor $m$. $\Upsilon$ is the ellipsoid of maximal volume in $\Omega$. By
translating both $\Omega$ and $\Upsilon$ we may assume that
\begin{equation*}
\Upsilon=\{x\in \R^m: \sum_{i=1}^m \frac{x_i^2}{a_i^2}<1\}, \qquad
a_i>0,\quad i=1,\dots,m.
\end{equation*}
It is easily verified that the unique solution of \eqref{e2} for
$\Upsilon$ is given by
\begin{equation*}
v_{\Upsilon}(x)=2^{-1}\left(\sum_{i=1}^m\frac{1}{a_i^2}\right)^{-1}\left(1-\sum_{i=1}^m\frac{x_i^2}{a_i^2}\right).
\end{equation*}
By changing to spherical coordinates, we find that
\begin{equation*}
\int_{\Upsilon} v_{\Upsilon}^p=2^{-p}\omega_m\frac{\Gamma(\frac
m2+1)\Gamma(p+1)}{\Gamma(\frac
m2+p+1)}\left(\sum_{i=1}^m\frac{1}{a_i^2}\right)^{-p}\prod_{i=1}^ma_i,
\end{equation*}
where $\omega_m=|B_1|$.
Since $\Omega\mapsto v_{\Omega}$ is increasing we have by \eqref{e9} that $\Omega\mapsto T_p(\Omega)$ is increasing, and
\begin{align}\label{e40}
T_p(\Omega)&\ge T_p(\Upsilon)\nonumber \\ & =2^{-1}\omega_m^{1/p}\bigg(\frac{\Gamma(\frac
m2+1)\Gamma(p+1)}{\Gamma(\frac
m2+p+1)}\bigg)^{1/p}\left(\sum_{i=1}^m\frac{1}{a_i^2}\right)^{-1}\bigg(\prod_{i=1}^ma_i\bigg)^{1/p}.
\end{align}
Since $\Omega\subset m\Upsilon$,
\begin{equation}\label{e41}
|\Omega|\le\int_{m\Upsilon}dx=\omega_mm^m\prod_{i=1}^ma_i.
\end{equation}
By the monotonicity of Dirichlet eigenvalues, we have that
$\lambda_1(\Omega)\ge \lambda_1(m\Upsilon)$. The ellipsoid $m\Upsilon$
is contained  in a cuboid with lengths $2ma_1,\dots,2ma_m.$ So we
have that
\begin{equation}\label{e42}
\lambda_1(\Omega)\ge \frac{\pi^2}{4m^2}\sum_{i=1}^m\frac{1}{a_i^2}.
\end{equation}
Combining \eqref{e40}, \eqref{e41}, \eqref{e42}, and \eqref{e9}
gives \eqref{e17}. \hspace*{\fill }$\square $

\section{Proof of Theorem \ref{the3}\label{sec3}}

(i) It follows from the second inequality in \eqref{e11} that $\mathfrak{F}_q\le \mathfrak{F}_{\infty}$. Hence $F_p(\Omega)\le \mathfrak{F}_q\le \mathfrak{F}_{\infty}$. Taking subsequently the supremum over all $\Omega$ with finite measure we obtain the first assertion under
(i). As \eqref{e11} holds for all open sets with finite measure, it also holds for all bounded convex sets. Then, the preceding argument gives the second assertion under (i).

\noindent(ii) It follows from \eqref{e12} that $\mathfrak{F}_p\le 1, \, 1\le p\le 2$. In Theorem 1.2 of \cite{MvdB} it was shown that the bound $F_1(\Omega)\le 1$ is sharp. That is $\mathfrak{F}_1=1$.
This, together with (i), then implies that $\mathfrak{F}_p=1$ for $1\le p\le 2$. Hence $p_m\ge 2$. Since $\mathfrak{F}_p^{\textup{convex}}\le \mathfrak{F}_p$ we conclude the second inequality in \eqref{e20}. To prove the upper bound on $p_m^{\textup{convex}}$ we recall that
\begin{equation*}
v_{B_1}(x)=\frac{1-|x|^2}{2m}.
\end{equation*}
Hence, denoting by $(f)_{+}$ the positive part of a real-valued function $f$, we have that
\begin{align}\label{e44}
T_p(B_1)&=\bigg(\int_{[0,1]}dr\,m\omega_m\bigg(\frac{1-r^2}{2m}\bigg)^pr^{m-1}\bigg)^{1/p}\nonumber \\ &=\frac{(m\omega_m)^{1/p}}{2^{(p+1)/p}m}\bigg(\int_{[0,1]}d\theta (1-\theta)^p\theta^{(m-2)/2}\bigg)^{1/p}\nonumber \\ &
\ge \frac{(m\omega_m)^{1/p}}{2^{(p+1)/p}m}\bigg(\int_{[0,1]}d\theta (1-p\theta)_+\theta^{(m-2)/2}\bigg)^{1/p}\nonumber \\ &\ge\frac{2^{1/p}\omega_m^{1/p}}{2m(m+2)^{1/p}p^{m/(2p)}}\nonumber \\ &\ge
\frac{\omega_m^{1/p}}{2m^{(p+1)/p}p^{m/(2p)}}.
\end{align}
It follows that
\begin{equation*}
\mathfrak{F}_p\ge F_p(B_1)\ge \frac{j^2_{(m-2)/2}}{2m^{(p+1)/p}p^{m/(2p)}},
\end{equation*}
where $\lambda_1(B_1)=j^2_{(m-2)/2}$, and $j_{(m-2)/2}$ is the first positive zero of the Bessel function $J_{(m-2)/2}$.
Hence
\begin{align}\label{e46}
\mathfrak{F}_{8m}\ge \frac{j^2_{(m-2)/2}}{2m^{1+\frac{1}{8m}}(8m)^{\frac{1}{16}}}\ge \frac{j^2_{(m-2)/2}}{2^{\frac{19}{16}}m^{\frac98}}.
\end{align}
One verifies numerically that for $m=2,...,19,$ the right-hand side of \eqref{e46} is strictly greater than $1$.
Since $j^2_{(m-2)/2}\ge ((m-2)/2)^2$ (see inequality (1.6) in \cite{AE}) we have for $m\ge 20$ that $j^2_{(m-2)/2}> m^2/5$. But $m^{\frac78}\ge 5\cdot 2^{19/16}, m\ge 20$.

\noindent(iii) It was shown in \cite{MvdB} that the formula defining $\mathfrak{F}_1$ in \eqref{e13} does not have a maximiser.
Since by \eqref{e12}, $F_p(\Omega)\le F_1(\Omega)^{1/p}\le 1$ for any $1\le p \le 2$ and any open subset $\Omega \subset \R^m$ with $| \Omega | < \infty$,
none of the formulae defining $\mathfrak{F}_p,\, 1\le p\le 2$, have maximisers.
Clearly, the maximising sequence constructed in \cite{MvdB} for $\mathfrak{F}_1$ is a maximising sequence for $\mathfrak{F}_p,\, 1\le p\le p_m$.

\noindent(iv) To prove left-continuity we first fix $1<q<\infty$, and let $\epsilon>0$ be arbitrary. There exists an open set $\Omega_{q,\epsilon}\subset \R^m$ such that
\begin{equation}\label{e47}
\mathfrak{F}_q\ge F_q(\Omega_{q,\epsilon})\ge \mathfrak{F}_q-\frac{\epsilon}{2}.
\end{equation}
By scaling we may assume that $|\Omega_{q,\epsilon}|=1$. Let $p\in[1,q)$. Then
\begin{equation*}
\mathfrak{F}_q\ge\mathfrak{F}_p\ge F_p(\Omega_{q,\epsilon}),
\end{equation*}
and
\begin{align*}
\int_{\Omega_{q,\epsilon}}v^q_{\Omega_{q,\epsilon}}&\le \Vert v_{\Omega_{q,\epsilon}}\Vert^{q-p}_{L^{\infty}(\Omega_{q,\epsilon})}\int_{\Omega_{q,\epsilon}}v^p_{\Omega_{q,\epsilon}}\nonumber \\ &\le \mathfrak{F}_{\infty}^{q-p}\lambda_1^{p-q}(\Omega_{q,\epsilon})\int_{\Omega_{q,\epsilon}}v^p_{\Omega_{q,\epsilon}},
\end{align*}
implying that
\begin{equation*}
F_p(\Omega_{q,\epsilon})\ge \mathfrak{F}_{\infty}^{(p-q)/p}F_q(\Omega_{q,\epsilon})^{q/p}.
\end{equation*}
Since $p\mapsto F_p$ is increasing we have that
\begin{equation}\label{e51}
\lim_{p\uparrow q}F_p(\Omega_{q,\epsilon})\ge \mathfrak{F}_{\infty}^{(p-q)/p}F_q(\Omega_{q,\epsilon})^{q/p}.
\end{equation}
Since $p<q$, we have by the continuity of the right-hand side of \eqref{e51} in $p$, \eqref{e46} and \eqref{e47} that
\begin{equation*}
\mathfrak{F}_q\ge\lim_{p\uparrow q}\mathfrak{F}_p\ge F_q(\Omega_{q,\epsilon})\ge \mathfrak{F}_q-\frac{\epsilon}{2}.
\end{equation*}
Letting $\epsilon\downarrow 0$ concludes the proof for $1<q<\infty$.

To prove left-continuity at $q=\infty$ we let $\epsilon>0$ be arbitrary. By \eqref{e13} there exists an open set $\Omega_{\infty,\epsilon}$ such that
\begin{equation}\label{e53}
\mathfrak{F}_{\infty}\ge F_{\infty}(\Omega_{\infty,\epsilon})\ge \mathfrak{F}_{\infty}-\frac{\epsilon}{2}.
\end{equation}
Without loss of generality we may assume by scaling that $|\Omega_{\infty,\epsilon}|=1$. Then $v_{\Omega_{\infty,\epsilon}}\in L^p(\Omega_{\infty,\epsilon}),\, 1\le p\le \infty$, and
\begin{equation*}
F_{\infty}(\Omega_{\infty,\epsilon})=\lim_{p\rightarrow \infty} F_{p}(\Omega_{\infty,\epsilon}).
\end{equation*}
Hence there exists $p(\epsilon)<\infty$ such that
\begin{equation}\label{e55}
|F_{\infty}(\Omega_{\infty,\epsilon})- F_{p}(\Omega_{\infty,\epsilon})|\le \frac{\epsilon}{2},\quad p\ge p(\epsilon).
\end{equation}
This implies, by \eqref{e53} and \eqref{e55}, that
\begin{align}\label{e56}
\mathfrak{F}_p&\ge F_p(\Omega_{\infty,\epsilon}) \nonumber \\&
\ge F_{\infty}(\Omega_{\infty,\epsilon})-\frac{\epsilon}{2}\nonumber \\ &\ge \mathfrak{F}_{\infty}-\epsilon,\quad p\ge p(\epsilon).
\end{align}
Hence by (i) and \eqref{e56},
\begin{equation*}
\mathfrak{F}_{\infty}\ge \lim_{p\uparrow \infty}\mathfrak{F}_p\ge\mathfrak{F}_{\infty}-\epsilon.
\end{equation*}
The left-continuity at $\infty$ now follows since $\epsilon>0$ was arbitrary.

\noindent(v) Let $n\in \N,\, p\ge 1$. Without loss of generality we may assume that $|\Omega|=1$. An integration by parts shows that
\begin{equation}\label{e58}
\int_{\Omega}v_{\Omega}^p=-\int_{\Omega}v_{\Omega}^p\Delta v_{\Omega}=p\int_{\Omega}v_{\Omega}^{p-1} |Dv_{\Omega}|^2=\frac{4p}{(p+1)^2}\int_{\Omega}|Dv_{\Omega}^{(p+1)/2}|^2.
\end{equation}
By \eqref{e1}
\begin{equation}\label{e59} \lambda_1(\Omega)\le\frac{\displaystyle\int_\Omega|Dv_{\Omega}^{(p+1)/2}|^2}{\displaystyle\int_\Omega
v_{\Omega}^{p+1}}.
\end{equation}
By \eqref{e58} and \eqref{e59} we have that
\begin{equation}\label{e60}
\int_{\Omega}v_{\Omega}^p\ge \frac{4p}{(p+1)^2}\lambda_1(\Omega)\int_{\Omega}v_{\Omega}^{p+1}.
\end{equation}
Multiplying both sides of \eqref{e60} by $\lambda^p_1(\Omega)$ gives that
\begin{equation}\label{e61}
\frac{4p}{(p+1)^2}F_{p+1}^{p+1}(\Omega)\le F_p^p(\Omega).
\end{equation}
Taking the supremum over all open $\Omega\subset \R^m$ with measure $1$, in the right-hand side of \eqref{e61}, and subsequently in the left-hand side of \eqref{e61} gives that
\begin{equation}\label{e62}
\mathfrak{F}_{p+1}^{p+1}\le \frac{(p+1)^2}{4p}\mathfrak{F}_p^p.
\end{equation}
Iterating \eqref{e62} $n-1$ times we find \eqref{e21}.
The same calculation carries over when $\Omega$ is an open, bounded convex set. This proves \eqref{e21a}.
By part (ii) we have that for $1\le p\le p_m,\, \mathfrak{F}_p=1$. This, together with \eqref{e21}, gives \eqref{e21b}.

\noindent(vi) Since $\mathfrak{F}_1=1$, we consider the case $n\in N,\,n\ge 2$. Put $p=1$ in \eqref{e21}, and replace $n$ by $n-1$. This gives \eqref{e21c}.

\noindent(vii) Substituting $p=1+\delta,\,0<\delta\le 1$ in \eqref{e21b} gives that
\begin{equation*}
\delta^{-1}\big(\mathfrak{F}_{2+\delta}-\mathfrak{F}_2\big)\le \delta^{-1}\bigg(\bigg(1+\frac{\delta^2}{4}\bigg)^{\frac{1}{2+\delta}}-1\bigg),\,
\end{equation*}
and the assertion follows by L' H\^{o}pital's rule.

\noindent(viii)
Taking suprema in \eqref{e12} over all bounded convex open sets $\Omega$ yields \eqref{e21d}.

\noindent(ix) Let $m = 2$. By \eqref{e21d}, and the numerical estimate (1.10) in \cite{MvdB} we have for $p=1+\delta,0<\delta\le 1$, that
\begin{equation}\label{e63}
\mathfrak{F}_{1+\delta}^{\textup{convex}}\le \bigg(1-\frac{1}{11560}\bigg)^{\frac{1}{1+\delta}}.
\end{equation}
By \eqref{e21a} for $n=1$ we have that
\begin{align}\label{e65}
\mathfrak{F}_{2+\delta}^{\textup{convex}}&\le \bigg(\frac{(2+\delta)^2}{4+4\delta}\bigg)^{\frac{1}{2+\delta}}\bigg(\mathfrak{F}_{1+\delta}^{\textup{convex}}\bigg)^{\frac{1+\delta}{2+\delta}}\nonumber \\ &
\le \bigg(1+\frac{\delta^2}{4}\bigg)^{\frac{1}{2+\delta}}\bigg(\mathfrak{F}_{1+\delta}^{\textup{convex}}\bigg)^{\frac{1+\delta}{2+\delta}}\nonumber \\ &
\le \bigg(1+\frac{\delta^2}{4}\bigg)^{\frac{1}{2+\delta}}\bigg(1-\frac{1}{11560}\bigg)^{\frac{1}{2+\delta}},
\end{align}
where we have used \eqref{e63} in the last inequality. Since the right-hand side of \eqref{e65} is equal to $1$ for $\delta^*=\frac{2}{(11559)^{1/2}}$,
we conclude that
\begin{equation*}
p_2^{\textup{convex}}\ge 2+\delta^*,
\end{equation*}
which proves the assertion in (ix).
\hspace*{\fill }$\square $

\section{A two-parameter family of inequalities \label{sec4}}

As mentioned at the end of the Introduction one can define a two-parameter family of products involving $T_p(\Omega),$ $\lambda_1(\Omega),$ and
$|\Omega|$, which is invariant under homotheties.

\begin{definition}\label{def3}
For an open set $\Omega\subset \R^m$ with finite Lebesgue measure, $p\ge 1,\, q\in \R$,
\begin{itemize}
\item[\textup{(i)}]
\begin{equation}\label{e74}
F_{p,q}(\Omega)=\frac{T_p(\Omega)\lambda^q_1(\Omega)}{|\Omega|^{\frac1p+\frac2m(1-q)}},
\end{equation}
\item[\textup{(ii)}]
\begin{equation}\label{e75}
F_{\infty,q}(\Omega)=\frac{\Vert v_{\Omega}\Vert_{L^{\infty}(\Omega)}\lambda^q_1(\Omega)}{|\Omega|^{\frac2m(1-q)}},
\end{equation}
\item[\textup{(iii)}]
\begin{equation}\label{e76}
\mathfrak{F}_{p,q}=\sup\{F_{p,q}(\Omega):\Omega\,\, \textup{open in}\, \R^m, \,
|\Omega|<\infty\},
\end{equation}
\item[\textup{(iv)}]
\begin{equation}\label{e77}
\mathfrak{F}_{\infty,q}=\sup\{F_{\infty,q}(\Omega):\Omega\,\, \textup{open in}\, \R^m, \,
|\Omega|<\infty\}.
\end{equation}
\end{itemize}
\end{definition}

It is straightforward to verify that the quantities defined in \eqref{e74} and \eqref{e75} are invariant under homotheties of $\Omega$. Below we characterize
those pairs $\{(p,q):p\ge 1\}$ for which the sharp constants defined in \eqref{e76} and \eqref{e77} are finite.
\begin{theorem}\label{the5}
\begin{itemize}
\item[\textup{(i)}]For $1\le p<\infty$, $\mathfrak{F}_{p,q}<\infty$ if and only if $q\le 1$.

\item[\textup{(ii)}]For $p=\infty$, $\mathfrak{F}_{\infty,q}<\infty$ if and only if $q\le 1$.
\end{itemize}
\end{theorem}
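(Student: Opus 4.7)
The plan is to reduce both items to Faber--Krahn and the already-established fact that $\mathfrak{F}_p \le \mathfrak{F}_\infty < \infty$ (Theorem \ref{the1}(i)). The key algebraic observation is the factorisation
$$
F_{p,q}(\Omega) \;=\; F_p(\Omega)\,\bigl(\lambda_1(\Omega)\,|\Omega|^{2/m}\bigr)^{q-1},
\qquad
F_{\infty,q}(\Omega) \;=\; F_\infty(\Omega)\,\bigl(\lambda_1(\Omega)\,|\Omega|^{2/m}\bigr)^{q-1},
$$
which follows immediately from the definitions \eqref{e74}, \eqref{e75}, \eqref{e8}, and \eqref{e4}. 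This isolates the scale-invariant combination $\Lambda(\Omega):=\lambda_1(\Omega)\,|\Omega|^{2/m}$ as the only quantity whose behaviour differs from the uniformly bounded $F_p(\Omega)$.

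For the sufficiency direction ($q\le 1$), the Faber--Krahn inequality gives $\lambda_1(\Omega)\ge \lambda_1(B^*)$ where $B^*$ is a ball with $|B^*|=|\Omega|$, so by scaling
$$
\Lambda(\Omega)\ \ge\ \lambda_1(B_1)\,\omega_m^{2/m}\ =:\ C_m\ >\ 0.
$$
Since $q-1\le 0$, the inequality reverses under the power $q-1$, yielding $\Lambda(\Omega)^{q-1}\le C_m^{q-1}$. Combined with $F_p(\Omega)\le \mathfrak{F}_\infty$ (from Theorem \ref{the1}(i)) this gives $F_{p,q}(\Omega)\le \mathfrak{F}_\infty\,C_m^{q-1}$, uniformly in $\Omega$; the $p=\infty$ case is identical.

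For the necessity direction ($q>1$), it suffices to exhibit admissible $\Omega$ making $F_{p,q}$ unbounded. Let $\Omega_n$ be the disjoint union of $n$ unit balls. Then $\lambda_1(\Omega_n)=\lambda_1(B_1)$ is independent of $n$, $|\Omega_n|=n\omega_m$, and by additivity of $T_p^p$ on disjoint components $T_p(\Omega_n)=n^{1/p}T_p(B_1)$. Substitution into \eqref{e74} yields
$$
F_{p,q}(\Omega_n)\ =\ \frac{T_p(B_1)\,\lambda_1(B_1)^q}{\omega_m^{1/p+2(1-q)/m}}\; n^{\,2(q-1)/m},
$$
which diverges as $n\to\infty$ whenever $q>1$. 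Replacing $T_p(B_1)$ by $\|v_{B_1}\|_{L^\infty}=(2m)^{-1}$ handles the $p=\infty$ case verbatim.

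The argument is essentially mechanical once one spots the factorisation above; there is no significant obstacle. The only points warranting care are that $\Omega_n$ has finite measure and satisfies $\lambda_1(\Omega_n)>0$, both of which are immediate, and that the $q\le 1$ bound covers negative $q$ (where $\lambda_1^q$ can be arbitrarily small), which is accommodated by the Faber--Krahn lower bound on $\Lambda(\Omega)$.
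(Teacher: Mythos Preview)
Your proof is correct and follows essentially the same route as the paper: the same factorisation $F_{p,q}(\Omega)=F_p(\Omega)\bigl(\lambda_1(\Omega)|\Omega|^{2/m}\bigr)^{q-1}$ combined with Faber--Krahn for $q\le 1$, and disjoint balls for $q>1$ (the paper shrinks the radii so that $|\Omega_n|=1$, while you keep unit radii and let $|\Omega_n|$ grow---homothetic choices, hence equivalent by scale invariance). One trivial quibble: your reference to \eqref{e4} for the definition of $F_\infty(\Omega)$ should be to Definition~\ref{def1}(ii), since \eqref{e4} defines $\mathfrak{F}_\infty$.
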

\begin{proof}
\noindent(i) We first suppose $q>1,\, 1\le p<\infty$. Let $\Omega_n$ be the disjoint union of $n$ balls with equal radii $r_n$, where $|\Omega_n|=\omega_mnr_n^m=1$.
Then $\lambda_1(\Omega_n)=r_n^{-2}\lambda_1(B_1)$. By scaling we have that
\begin{equation}\label{e78}
T_p^p(\Omega_n)=r_n^{2p}|B_1|^{-1}T_p^p(B_1).
\end{equation}
Hence by \eqref{e78},
\begin{equation}\label{e79}
\mathfrak{F}^p_{p,q}\ge F_{p,q}^p(\Omega_n)=|B_1|^{-1}r_n^{2p-2pq}T_p^p(B_1)\lambda_1^{pq}(B_1).
\end{equation}
Since $q>1$ and $r_n\downarrow 0$ as $n\rightarrow \infty$, we have that the right-hand side of \eqref{e79} tends to infinity as $n\rightarrow \infty$.

Next suppose $q\le 1,\, 1\le p<\infty$. By \eqref{e74}, Faber-Krahn, and Theorem \ref{the1}
\begin{align*}
F_{p,q}(\Omega)&=\frac{T_p(\Omega)\lambda_1(\Omega)}{|\Omega|^{\frac1p}}\lambda^{q-1}_1(\Omega) |\Omega|^{\frac2m(q-1)}\nonumber \\ &\le
\mathfrak{F}_p\lambda^{q-1}_1(B_1) |B_1|^{\frac2m(q-1)}\nonumber \\ &\le \mathfrak{F}_{\infty}\lambda^{q-1}_1(B_1) |B_1|^{\frac2m(q-1)}.
\end{align*}
This proves part (i).

\noindent(ii) We first suppose $q>1$, and let $\Omega_n$ be the set as in the proof of part (i) above. Then $\Vert v_{\Omega_n}\Vert_{L^{\infty}(\Omega_n)}=\frac{r_n^2}{2m}$.
Hence
\begin{align*}
\mathfrak{F}_{\infty,q}&\ge\frac{r_n^{2-2q}\lambda_1^q(B_1)}{2m},
\end{align*}
which tends to infinity as $r_n$ tends to $0$.

Next suppose $q\le 1.$ By \eqref{e75}, \eqref{e4} and Faber-Krahn,
\begin{align*}
F_{\infty,q}(\Omega)&=\frac{\Vert v_{\Omega}\Vert_{L^{\infty}(\Omega)}\lambda^q_1(\Omega)}{|\Omega|^{\frac2m(1-q)}}\nonumber \\ &\le
\mathfrak{F}_{\infty}\lambda^{q-1}_1(\Omega)|\Omega|^{\frac2m(q-1)}\nonumber \\ &\le \mathfrak{F}_{\infty}\lambda^{q-1}_1(B_1)|B_1|^{\frac2m(q-1)}.
\end{align*}
This proves part (ii).
\end{proof}

In general it looks very difficult to compute $\mathfrak{F}_{p,q}$ or even $\mathfrak{F}_p=\mathfrak{F}_{p,1}, p>2$, with the exception of $\mathfrak{F}_{p,0}$.
G. Talenti in \cite {Talenti} obtained a pointwise estimate between the rearrangement of the torsion function of a generic set with finite measure and the torsion function of the ball with the same measure. In particular this estimate implies that the $L^p$ norm of the torsion function is maximised by the $L^p$ norm of the torsion function for the ball with the same measure. Hence, by \eqref{e74} and \eqref{e75} we have
\begin{align*}
\mathfrak{F}_{p,0}=\frac{T_p(B_1)}{|B_1|^{\frac1p+\frac2m}}.
\end{align*}

However, in the one-dimensional case we have the following result.
\begin{theorem}\label{the5} If $m=1,\, q\le 1,\, 1\le p<\infty$, then
\begin{equation}\label{e84}
\mathfrak{F}_{p,q}=\frac{\pi^{(4pq+1)/(2p)}}{2^{(1+3p)/p}}\bigg(\frac{\Gamma(p+1)}{\Gamma(p+\frac32)}\bigg)^{1/p},
\end{equation}
and
\begin{equation}\label{e85}
\mathfrak{F}_{\infty,q}=\frac{\pi^{2q}}{8}.
\end{equation}
\end{theorem}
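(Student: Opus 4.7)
The plan is to exploit the one-dimensional structure. Every open $\Omega \subset \R$ with $|\Omega| < \infty$ is a countable disjoint union of open intervals $I_n = (a_n, b_n)$ of length $L_n = b_n - a_n$. On each interval the torsion function is given explicitly by $v_{I_n}(x) = (x-a_n)(b_n-x)/2$. Three basic facts follow. First, the additivity $T_p^p(\Omega) = \sum_n T_p^p(I_n)$. Second, $\|v_\Omega\|_{L^\infty(\Omega)} = L_*^2/8$, where $L_* := \sup_n L_n$. Third, since the Dirichlet spectrum of $\Omega$ is the union of the spectra of its components, $\lambda_1(\Omega) = \pi^2/L_*^2$.

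I would first compute $F_{p,q}$ on a single interval $I$ of length $L$. A Beta integral combined with Legendre's duplication formula applied to $\Gamma(2p+2)$ gives
\begin{equation*}
T_p^p(I) = 2^{-p} L^{2p+1} B(p+1,p+1) = \frac{\sqrt{\pi}\,\Gamma(p+1)}{2^{3p+1}\,\Gamma(p+3/2)}\,L^{2p+1}.
\end{equation*}
Together with $\lambda_1(I) = \pi^2/L^2$ and $|I|=L$, every power of $L$ in $F_{p,q}(I) = T_p(I)\lambda_1(I)^q/L^{1/p+2(1-q)}$ cancels (as it must by homothety invariance), and the remaining numerical constant coincides exactly with the right-hand side of \eqref{e84}. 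The same computation yields $F_{\infty,q}(I) = (L^2/8)(\pi^2/L^2)^q/L^{2(1-q)} = \pi^{2q}/8$, which matches \eqref{e85}.

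The main step is to show that no multi-interval configuration beats a single one. By homothety invariance, normalise $L_* = 1$ and set $x_n := L_n \in (0,1]$. Substituting the formulas above yields
\begin{equation*}
F_{p,q}(\Omega)^p = \frac{\sqrt{\pi}\,\Gamma(p+1)\,\pi^{2pq}}{2^{3p+1}\,\Gamma(p+3/2)} \cdot \frac{\sum_n x_n^{2p+1}}{\bigl(\sum_n x_n\bigr)^{1+2p(1-q)}}.
\end{equation*}
Since $x_n \le 1$, we have $x_n^{2p+1} \le x_n$, hence $\sum_n x_n^{2p+1} \le \sum_n x_n$. Moreover $\sum_n x_n \ge \sup_n x_n = 1$, and $2p(1-q) \ge 0$ — this is the only place where the hypothesis $q \le 1$ intervenes. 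Therefore the ratio is bounded by $\bigl(\sum_n x_n\bigr)^{-2p(1-q)} \le 1$, with equality precisely when $\Omega$ consists of a single interval. This proves \eqref{e84}. The analogous, simpler argument for $F_{\infty,q}$ reduces to
\begin{equation*}
F_{\infty,q}(\Omega) = \frac{\pi^{2q}}{8}\bigg(\frac{L_*}{|\Omega|}\bigg)^{2(1-q)} \le \frac{\pi^{2q}}{8},
\end{equation*}
using once again $q \le 1$ and $L_* \le |\Omega|$, with equality for a single interval; this establishes \eqref{e85}.

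The only mild obstacle I anticipate is bookkeeping: verifying that Legendre's duplication formula collapses $\Gamma(p+1)^2/\Gamma(2p+2)$ into the precise closed form displayed in \eqref{e84}. Beyond that, the argument reduces to the elementary inequality $x^{2p+1} \le x$ on $[0,1]$ together with $\sum_n L_n \ge \sup_n L_n$.
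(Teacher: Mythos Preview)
Your proof is correct and follows essentially the same route as the paper: decompose $\Omega$ into disjoint intervals, compute $T_p$ on a single interval via a Beta integral, and use additivity together with $L_n^{2p+1}\le L_*^{2p}L_n$ and the hypothesis $q\le1$ to show that a single interval is optimal. The only cosmetic differences are your normalisation $L_*=1$ versus the paper's $|\Omega|=1$, and your explicit invocation of Legendre's duplication formula where the paper evaluates $\int_0^1(1-r^2)^p\,dr$ directly.
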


\noindent{\it Proof of Theorem \ref{the5}.}
Since $\Omega\subset\R^1$ is open it is a countable union of open intervals. Since $|\Omega|<\infty$, we let $2a_1\ge 2a_2\ge ...$ be the lengths of these intervals.
Without loss of generality we may assume that $|\Omega|=2\sum_{j=1}^{\infty}a_j=1$. By the first equality in \eqref{e44} we have by scaling for a single interval $B_a$ of length $2a$ that
\begin{align}\label{e66}
T_p(B_a)&=\frac{a^{(2p+1)/p}}{2}\bigg(2\int_{[0,1]}dr\, (1-r^2)^p\bigg)^{1/p}\nonumber \\ &
=\frac{a^{(2p+1)/p}\pi^{1/(2p)}}{2}\bigg(\frac{\Gamma(p+1)}{\Gamma(p+\frac32)}\bigg)^{1/p}\nonumber \\ &=a^{(2p+1)/p}c_p,
\end{align}
where $c_p$ can be read-off from \eqref{e66}.
Since $T_p^p$ is additive on disjoint open sets we have that
\begin{align*}
T^p_p(\Omega)&=c^p_p\sum_{j=1}^{\infty}a_j^{2p+1}\le c^p_pa_1^{2p}\sum_{j=1}^{\infty}a_j=2^{-1}c^p_pa_1^{2p}.
\end{align*}
Since
\begin{equation*}
\lambda_1(\Omega)=\frac{\pi^2}{4a_1^2},
\end{equation*}
$q\le 1$, and $2a_1\le 1$, we have that $(2a_1)^{2-2q}\le 1$. Hence
\begin{equation*}
F_{p,q}(\Omega)\le 2^{-1/p}c_p\bigg(\frac{\pi^2}{4}\bigg)^qa_1^{2-2q}\le 2^{-(1+2p)/p}\pi^{2q}c_p.
\end{equation*}
By taking the supremum over all $\Omega\subset \R^1$ with measure $1$ we obtain that
\begin{equation}\label{e70}
\mathfrak{F}_{p,q}\le2^{-(1+2p)/p}c_p\pi^{2q}.
\end{equation}
To obtain a lower bound for $\mathfrak{F}_{p,q}$ we make the particular choice of $\Omega=B_1$. This gives that
\begin{equation}\label{e71}
\mathfrak{F}_{p,q}\ge F_{p,q}(B_1)=2^{-(1+2p)/p}\pi^{2q}c_p.
\end{equation}
By \eqref{e70} and \eqref{e71}
we conclude that
\begin{equation}\label{e72}
\mathfrak{F}_{p,q}= F_{p,q}(B_1)=2^{-(1+2p)/p}\pi^{2q}c_p.
\end{equation}
and \eqref{e84} follows from \eqref{e72} and the definition of $c_p$ in \eqref{e66}.

To prove \eqref{e85} we just observe that the maximum of the torsion function and the first Dirichlet eigenvalue are determined by the largest interval in $\Omega$, i.e. $a_1$. Since $q\le 1$ we maximise the resulting expression by taking $a_1=\frac12$.
\hspace*{\fill }$\square $

Note that as $B_1$ is convex we also have that
\begin{equation}\label{e73}
\mathfrak{F}_p^{\textup{convex}}=\mathfrak{F}_{p,1}=\mathfrak{F}_p,
\end{equation}
and recover the known values $\mathfrak{F}_1=\frac{\pi^2}{12}, \mathfrak{F}_{\infty}=\frac{\pi^2}{8}$, (\cite{MvdB}, \cite{MvdB1}). Note that $\mathfrak{F}_1<\mathfrak{F}_2=\frac{\pi^2}{\sqrt{120}}<1$, which is in contrast with the higher dimensional situation $m\ge 2$, where $\mathfrak{F}_p=1,\,1\le p\le 2$.

\end{document}